\newtheorem{theorem}{Theorem}[section]
\newtheorem{corollary}[theorem]{Corollary}
\newtheorem{remark}[theorem]{Remark}
\numberwithin{equation}{section}
\pgfplotsset{compat=1.17}
\begin{document}

\title{Asymptotic Expansions of The Traces of the Thermoelastic Operators}

\author{Genqian Liu}
\address{School of Mathematics and Statistics, Beijing Institute of Technology, Beijing 100081, China}
\email{liugqz@bit.edu.cn}

\author{Xiaoming Tan}
\address{School of Mathematics and Statistics, Beijing Institute of Technology, Beijing 100081, China}
\email{xtan@bit.edu.cn}

\subjclass[2020]{35P20, 58J50, 35S05, 58J40, 74F05}
\keywords{Thermoelastic operator; Thermoelastic eigenvalues; Asymptotic expansion; Spectral invariants; Spectral rigidity}

\begin{abstract}
    We obtain the asymptotic expansions of the traces of the thermoelastic operators with the Dirichlet and Neumann boundary conditions on a Riemannian manifold, and give an effective method to calculate all the coefficients of the asymptotic expansions. These coefficients provide precise geometric information. In particular, we explicitly calculate the first two coefficients concerning the volumes of the manifold and its boundary. As an application, by combining our results with the isoperimetric inequality we show that an $n$-dimensional geodesic ball is uniquely determined up to isometry by its thermoelastic spectrum among all bounded thermoelastic bodies with boundary.
\end{abstract}

\maketitle

\section{Introduction}

\addvspace{5mm}

Let $\Omega$ be a smooth compact Riemannian manifold of dimension $n$ with smooth boundary $\partial \Omega$. We consider $\Omega$ as a homogeneous, isotropic, thermoelastic body. Assume that the Lam\'{e} coefficients $\lambda$, $\mu$ and the heat conduction coefficient $\alpha$ of the thermoelastic body are constants which satisfy $\mu > 0$, $\lambda + \mu \geqslant 0$ and $\alpha>0$. We denote by $\operatorname{grad},\, \operatorname{div},\, \Delta_{g},\, \Delta_{B}$ and $\operatorname{Ric}$, respectively, the gradient operator, the divergence operator, the Laplace--Beltrami operator, the Bochner Laplacian and the Ricci tensor. We define the thermoelastic operator
\begin{align}\label{1.2}
    \mathcal{L}_g :=
    \begin{pmatrix}
        \mu \Delta_{B} + (\lambda + \mu)\operatorname{grad}\operatorname{div} + \mu \operatorname{Ric} + \rho\omega^2 & -\beta\operatorname{grad} \\
        i\omega\theta_{0}\beta\operatorname{div} & \alpha\Delta_{g} + i\omega \gamma
    \end{pmatrix},
\end{align}
where the coefficient $\beta$ depends on $\lambda,\,\mu$ and the linear expansion coefficient, $\gamma$ is the specific heat per unit volume, $\theta_{0}$ is the reference temperature, $\rho$ is the density of the thermoelastic body, $\omega$ is the angular frequency and $i = \sqrt{-1}$. We denote by $\textbf{\textit{u}}(x) \in (C^{\infty}(\Omega))^{n}$ and $\theta(x) \in C^{\infty}(\Omega)$ the displacement and the temperature variation, respectively. By the methods of \cite{Kupr80,Liu_NL21}, we can write the steady thermoelastic equation as follows
\begin{align}\label{1.1}
    \mathcal{L}_g \textbf{\textit{U}} = 0 \quad \text{in}\ \Omega,
\end{align}
where $\textbf{\textit{U}} = (\textbf{\textit{u}},\theta)^t$, the superscript $t$ denotes the transpose. Equation \eqref{1.1} is an extension of the classical elastic equation. In particular, when $\Omega$ is a bounded domain in $\mathbb{R}^n$ and the temperature of the medium is not taken into consideration, equation \eqref{1.1} reduces to the classical elastic equation (see pp.\,35--36 of \cite{Kupr80}).

In the case of classical elasticity, assume that the temperature of the medium is the same at all points and is not changed during deformation. Nevertheless, in reality, most materials are not pure elastic, a temperature change in the elastic body will generate additional strain and stress. Therefore, deformation is followed by temperature variation and, conversely, temperature variation is followed by deformation of the thermoelastic body due to thermal expansion. Even if there is no external stresses or mass forces, some heat sources change the temperature of the thermoelastic body, there will be deformation. The significance of the equation \eqref{1.1} consists not only in the important independent role of this state, frequently occurring in engineering, but also in the fact that their investigation opens up the way to the study the general type (see p.\,528 of \cite{Kupr80}).

\addvspace{2mm}

In this paper, we study the asymptotic expansions of the taces of the thermoelastic operators on $\Omega$. We denote by $\mathcal{L}_g^-$ and $\mathcal{L}_g^+$ the thermoelastic operators with the Dirichlet and Neumann boundary conditions, respectively. Here the Neumann boundary condition is defined by, for any $\textbf{\textit{U}} \in (C^1(\Omega))^{n+1}$,
\begin{align*}
    N (\textbf{\textit{U}}) :=
    \begin{pmatrix}
        \lambda\nu \operatorname{div} + \mu\nu S & -\beta \nu \\
        0 & \alpha\partial_\nu
    \end{pmatrix}
    \textbf{\textit{U}} \quad \text{on}\ \partial \Omega,
\end{align*}
where $\nu$ is the outward unit normal vector to $\partial \Omega$ and $S$ is the stress tensor (also called the deformation tensor) of type $(1,1)$ defined by $S\textbf{\textit{u}}:=\nabla \textbf{\textit{u}} + \nabla \textbf{\textit{u}}^t$ for any $\textbf{\textit{u}} \in (C^1(\Omega))^n$ (see p.~562 of \cite{Taylor11.3}).  According to the theory of elliptic operators, $\mathcal{L}_g^-$ (respectively, $\mathcal{L}_g^+$) is an unbounded, self-adjoint and negative (respectively, non-positive) operator in $(H_0^1(\Omega))^{n+1}$ (respectively, $(H^1(\Omega))^{n+1}$), it follows from \cite{Avra06,Hook90,Laptev97,Liu_NL21,Pleijel39} that there exists a sequence $\{\tau^{-}_k\}_{k \geqslant 1}$ (respectively, $\{\tau^{+}_k\}_{k \geqslant 1}$) such that the following eigenvalue problem
\begin{align}\label{1.6}
    \begin{cases}
        \mathcal{L}_g \textbf{\textit{U}}^{-}_k + \tau^{-}_k \textbf{\textit{U}}^{-}_k = 0 & \text{in}\ \Omega, \\
        \textbf{\textit{U}}^{-}_k = 0 & \text{on}\ \partial \Omega
    \end{cases}
\end{align}
(respectively,
\begin{align}\label{1.7}
    \begin{cases}
        \mathcal{L}_g \textbf{\textit{U}}^{+}_k + \tau^{+}_k \textbf{\textit{U}}^{+}_k = 0 & \text{in}\ \Omega, \\
        N(\textbf{\textit{U}}^{+}_k) = 0 & \text{on}\ \partial \Omega)
    \end{cases}
\end{align}
has a discrete spectrum $0 < \tau^-_{1} < \tau^-_{2} \leqslant \cdots \leqslant \tau^-_{k} \leqslant \cdots \to +\infty$
(respectively, $0 \leqslant \tau^+_{1} < \tau^+_{2} \leqslant \cdots \leqslant \tau^+_{k} \leqslant \cdots \to +\infty$), where $\textbf{\textit{U}}^-_k\in (H_0^1(\Omega))^{n+1}$ (respectively, $\textbf{\textit{U}}^+_k  \in (H^1(\Omega))^{n+1}$) is the eigenvector corresponding to the Dirichlet eigenvalue $\tau^-_{k}$ (respectively, Neumann eigenvalue $\tau^+_{k}$).

\addvspace{2mm}

Let us point out that the thermoelastic operator is a non-Laplace type operator. Contrary to the Laplace type operators, there are no systematic effective methods to explicitly calculate the coefficients of the asymptotic expansions of the traces for non-Laplace type operators. In 1971, Greiner \cite{Greiner71} indicated that {\it “the problem of interpreting these coefficients geometrically remains open”}. In this sense, it is just beginning that the study of geometric aspects of spectral asymptotics for non-Laplace type operators, the corresponding methodology is still underdeveloped in comparison with the theory of the Laplace type operator. Thus, {\it the geometric aspect of the spectral asymptotics of non-Laplace type operators remains an open problem} (see \cite{Avra06}).

Due to the fact that the Riemannian structure on a manifold is determined by a Laplace type operator, the systematic explicit calculations of coefficients of heat kernels for Laplace type operators are now well understood, see Gilkey \cite{Gilkey75} and many others (see \cite{AGMT10,BGV92,Gilkey79,Gilkey95,Grubb86,Grubb09,Kirsten01,Liu11,Vassilevich03} and references therein). For the classical boundary conditions, like Dirichlet, Neumann, Robin, and mixed combination thereof on vector bundles, the coefficients of the asymptotic expansions have been explicitly computed up to the first five terms (see, for example, \cite{BranGilk90,BPKV99,Kirs98}). For other types of operators, the first author of this paper gave the first two coefficients of the asymptotic expansions of the traces of the Navier--Lam\'{e} operator \cite{Liu_NL21} and the Stokes operator \cite{Liu_S21}. The first author of this paper also gave the first four coefficients of the asymptotic expansions of heat traces for the classical Dirichlet-to-Neumann map \cite{Liu15} (see Polterovich and Sher \cite{PoltSher15} for the first three coefficients) and the polyharmonic Steklov operator \cite{Liu14}, and gave the first two coefficients of the asymptotic expansions of the heat traces for elastic Dirichlet-to-Neumann map \cite{Liu19} as well. Recently, the authors of this paper have obtained the first four coefficients of the asymptotic expansion of heat trace for the magnetic Dirichlet-to-Neumann map associated with the magnetic Schr\"{o}dinger operator \cite{LiuTan21}. To the best of our knowledge, it has not previously been systematically applied in the context of the thermoelastic operators with the Dirichlet and Neumann boundary conditions.

\addvspace{2mm}

The main result of this paper is the following theorem.
\begin{theorem}\label{thm1.1}
    Let $\Omega$ be a smooth compact Riemannian manifold of dimension $n$ with smooth boundary $\partial \Omega$. Assume that $\alpha>0$, $\mu > 0$ and $\lambda+\mu \geqslant 0$. Let $\mathcal{L}_g^-$ and $\mathcal{L}_g^+$ be the thermoelastic operators with the Dirichlet and Neumann boundary conditions, respectively, and $\{\tau^{\mp}_k\}_{k\geqslant 1}$ satisfy the eigenvalue equations $\mathcal{L}_g^{\mp} \textbf{\textit{U}}^{\mp}_k + \tau^{\mp}_k \textbf{\textit{U}}^{\mp}_k = 0$. Then
    \begin{align*}
        \sum_{k=1}^{\infty} e^{-t \tau^{\mp}_k} 
        = \operatorname{Tr} (e^{t \mathcal{L}_g^{\mp}}) 
        =t^{-n/2} \bigl(a_0+a_1^{\mp}t^{1/2}+\cdots+a_n^{\mp}t^{n/2}+O(t^{(n+1)/2})\bigr)\quad \text{as}\ t \to 0^+.
    \end{align*}
    In particular,
    \begin{align*}
        a_0 &= \frac{\operatorname{vol}(\Omega)}{(4\pi)^{n/2}} \biggl(\frac{n-1}{\mu^{n/2}} + \frac{1}{(\lambda+2\mu)^{n/2}} + \frac{1}{\alpha^{n/2}}\biggr), \\[1mm]
        a_1^{\mp} &= \mp \frac{1}{4}\frac{\operatorname{vol}(\partial \Omega)}{(4\pi)^{(n-1)/2}} \biggr(\frac{n-1}{\mu^{(n-1)/2}} + \frac{1}{(\lambda+2\mu)^{(n-1)/2}} + \frac{1}{\alpha^{(n-1)/2}}\biggr),
    \end{align*}
    where $\operatorname{vol}(\Omega)$ and $\operatorname{vol}(\partial \Omega)$ denote the $n$-dimensional volume of $\Omega$ and the $(n-1)$-dimensional volume of $\partial \Omega$, respectively.
\end{theorem}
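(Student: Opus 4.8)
The plan is to construct the heat parametrix for the operator $\mathcal{L}_g^{\mp}$ and extract the first two terms of the trace expansion from its symbol. Since $\mathcal{L}_g$ is a non-Laplace type matrix operator, the first step is to decouple its leading behavior. Observe that the principal symbol of $\mathcal{L}_g$, after rescaling, has the block form $\operatorname{diag}\bigl(-\mu|\xi|^2 I_n - (\lambda+\mu)\xi\otimes\xi,\, -\alpha|\xi|^2\bigr)$ modulo lower order terms; the $n\times n$ elastic block splits along $\operatorname{div}$-free and gradient directions, giving eigenvalues $\mu|\xi|^2$ (with multiplicity $n-1$) and $(\lambda+2\mu)|\xi|^2$ (with multiplicity $1$), while the scalar heat block contributes $\alpha|\xi|^2$. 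This decomposition of the principal symbol is what produces the three terms $\frac{n-1}{\mu^{n/2}}$, $\frac{1}{(\lambda+2\mu)^{n/2}}$ and $\frac{1}{\alpha^{n/2}}$ in $a_0$.

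Next I would set up the resolvent $(\mathcal{L}_g - \zeta)^{-1}$ as a pseudodifferential operator with parameter $\zeta$ along a suitable ray (or keeping the full $e^{t\mathcal{L}_g}$ via a contour integral $\frac{1}{2\pi i}\oint e^{t\zeta}(\mathcal{L}_g-\zeta)^{-1}\,d\zeta$), and build its symbol $q(x,\xi,\zeta) = q_{-2}+q_{-3}+\cdots$ recursively from the symbol of $\mathcal{L}_g$. Here $q_{-2} = (p_2(x,\xi) - \zeta)^{-1}$ where $p_2$ is the (matrix) principal symbol, and the lower-order terms $q_{-2-j}$ are determined by the usual composition formula. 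Integrating $q_{-2}$ over $\xi$ and taking the matrix trace, then performing the $\zeta$-contour integral, yields the interior contribution $a_0 t^{-n/2}$; the key computation is $\operatorname{tr}\int_{\mathbb{R}^n} (p_2(x,\xi)+|\xi|^2\text{-type})^{-1}$, which reduces to scalar Gaussian integrals once $p_2$ is diagonalized as above, and the $(4\pi)^{-n/2}$ normalization appears in the standard way.

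For the boundary term $a_1^{\mp}$, I would work in boundary normal coordinates and construct the parametrix for the boundary value problem, incorporating the Dirichlet or Neumann condition. The standard approach: freeze coefficients at a boundary point, reduce to a half-space model problem $x_n > 0$ with the model operator having constant-coefficient symbol, and solve the resulting ODE system in $x_n$ with the given boundary condition. The trace of the boundary layer correction, integrated over $\partial\Omega$, gives $a_1^{\mp} t^{-(n-1)/2}\cdot t^{1/2}$. Because at this order the elastic block and the heat block still decouple (the coupling terms $\beta\operatorname{grad}$, $i\omega\theta_0\beta\operatorname{div}$ are lower order and do not affect $a_0$ or $a_1$), the boundary contribution is simply the sum of the known boundary coefficients: for each scalar Laplace-type summand $c|\xi|^2$ the Dirichlet (resp. Neumann) problem contributes $\mp\frac14 (4\pi)^{-(n-1)/2} c^{-(n-1)/2}\operatorname{vol}(\partial\Omega)$, and one checks that the Neumann boundary condition $N$ restricted to the decoupled components reduces to the standard Neumann/Robin condition at this order. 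Summing over the three components with their multiplicities gives the stated $a_1^{\mp}$.

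The main obstacle is the boundary analysis: verifying rigorously that at the orders relevant to $a_0$ and $a_1^{\mp}$ the system genuinely decouples into three scalar Laplace-type problems \emph{and} that the matrix Neumann operator $N$ projects, to leading order, onto the standard Neumann condition for each scalar piece (in particular that the stress-tensor term $\mu\nu S$ contributes the correct scalar Neumann data on both the $\operatorname{div}$-free and gradient parts, and that the off-diagonal $-\beta\nu$ entry does not enter at this order). Handling the ellipticity of the boundary value problem (the Lopatinski–Shapiro condition for the full matrix system) and justifying the parametrix construction for a non-Laplace type system also requires care, but the essential new computation is the model half-space problem, which we carry out by explicit ODE integration.
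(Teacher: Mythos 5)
Your proposal takes a genuinely different route for the boundary term than the paper does. For the interior coefficient $a_0$ your analysis matches the paper's: diagonalize the principal symbol into $(\lambda+2\mu)|\xi|^2$ on $\operatorname{span}(\xi)$, $\mu|\xi|^2$ on $\xi^\perp$ (multiplicity $n-1$), and the decoupled scalar $\alpha|\xi|^2$; build the resolvent symbol recursively; do the contour and $\xi$-integrals. The paper carries this out in exactly this way (cf.\ its $\operatorname{Tr}\textbf{\textit{b}}_{-2}$). For $a_1^{\mp}$, however, the paper does \emph{not} set up a half-space boundary-layer parametrix: it uses the McKean--Singer ``method of images.'' It passes to the closed double $M=\Omega\cup\partial\Omega\cup\Omega^*$, writes $\textbf{\textit{K}}^{\mp}(t,x,y)=\textbf{\textit{K}}(t,x,y)\mp\textbf{\textit{K}}(t,x,y^*)$ in terms of the interior kernel $\textbf{\textit{K}}$ on $M$, and obtains the $\frac14$ boundary factor from a single Gaussian integral $\int_0^\infty e^{-x_n^2/(ct)}\,dx_n=\tfrac12\sqrt{\pi ct}$ after evaluating the oscillatory $\xi$-integral with $x-x^*=(0,\dots,0,2x_n)$. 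No half-space ODE system and no Lopatinski--Shapiro check is performed. What the paper's route buys is brevity: the boundary coefficient follows from the same interior symbol $\textbf{\textit{b}}_{-2}$, with the reflected-diagonal term picking up the $t^{1/2}$ gain automatically. What your route buys is generality and transparency about the boundary condition: you make explicit (as ``the main obstacle'') that the matrix Neumann condition $N$ must effectively reduce to scalar Neumann data on the three decoupled components, which is genuinely the crux. The method of images silently absorbs the same issue into the assertion that $\textbf{\textit{K}}(t,x,y)+\textbf{\textit{K}}(t,x,y^*)$ satisfies the traction-type Neumann condition on $\partial\Omega$ (and that the reflected extension of the operator respects the correct parity of vector components), which is just as nontrivial but less visible. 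In short: both routes hinge on the same boundary-compatibility step, the paper via the doubling symmetry, you via the half-space model; your sketch leaves the latter undone, but you correctly identify it as the step that must be carried out.

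One concrete caution on your half-space plan: the elastic Neumann condition $\lambda\nu\operatorname{div}+\mu\nu S$ couples the normal and tangential displacement components (e.g.\ its tangential part is $\mu(\partial_n u^\alpha+\partial_\alpha u^n)$, its normal part $\lambda\operatorname{div}\textbf{\textit{u}}+2\mu\partial_n u^n$), so after tangential Fourier transform the $x_n$-ODE system does \emph{not} literally diagonalize along ``$\operatorname{div}$-free vs.\ gradient'' at $\xi'\neq 0$. You would need to solve a genuinely coupled $2\times 2$ system (plus $n-2$ decoupled scalars and the thermal scalar), show the cross-terms contribute only at order $O(t^{1-n/2})$ to the trace, and only then conclude the stated $a_1^{+}$. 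This is doable but is precisely where the work lies; your write-up currently asserts rather than proves it.
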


\addvspace{2mm}

\begin{remark}
    Theorem {\rm\ref{thm1.1}} shows that both the volume $\operatorname{vol}(\Omega)$ and $\operatorname{vol}(\partial \Omega)$ are spectral invariants, they can also be obtained by the thermoelastic eigenvalues with respect to the Dirichlet or Neumann boundary condition. This gives an answer to an interesting and open problem mentioned in {\rm\cite{Avra06,Greiner71,Kac66}}. Roughly speaking, one can “hear” the volume of the domain and the surface area of its boundary by “hearing” all the pitches of the vibration of a thermoelastic body. This result is a generalization of the corresponding result for the elastic Navier--Lam\'{e} operator $($see {\rm\cite{Liu_NL21}}$)$.
\end{remark}
\begin{remark}
    By applying the Tauberian theorem $($see Theorem $15.3$ on p.\,$30$ of {\rm\cite{Kore04}} or p.\,$107$ of {\rm\cite{Taylor11.2}}$)$, we can easily obtain the Weyl-type law for the counting function $N^{\mp}(\tau):= \#\{k|\tau_k^{\mp} \leqslant \tau\}$ of thermoelastic Steklov eigenvalues:
    \begin{align*}
        N^{\mp}(\tau) = & \frac{\operatorname{vol}(\Omega)}{(4\pi)^{n/2}\Gamma(1+n/2)} \biggl(\frac{n-1}{\mu^{n/2}} + \frac{1}{(\lambda+2\mu)^{n/2}} + \frac{1}{\alpha^{n/2}}\biggr) \tau^{n/2} + o(\tau^{n/2}) \quad \text{as}\ \tau \to +\infty.
    \end{align*}
\end{remark}

\addvspace{2mm}

As an application of Theorem \ref{thm1.1}, we have the following spectral rigidity result:
\begin{corollary}\label{cor1.4}
    Let $\mathcal{M}$ be a Riemannian manifold of dimension $n$, $\Omega \subset \mathcal{M}$ be a compact domain with smooth boundary and $B^n \subset \mathcal{M}$ be an $n$-dimensional geodesic ball. Asuume that the following two conditions hold:
    \begin{enumerate}[$(a)$]
        \item The thermoelastic spectrum of $\Omega$ with respect to the Dirichlet or Neumann boundary condition is equal to that of $B^n$;
        \item The isoperimetric inequality holds on $\mathcal{M}$, $($i.e., let $\tilde{\Omega} \subset \mathcal{M}$ be a compact subset with boundary. If $\operatorname{vol}(\tilde{\Omega}) = \operatorname{vol}(B^n)$, then $\operatorname{vol}(\partial\tilde{\Omega}) \geqslant \operatorname{vol}(\partial B^n)$, equality holds if and only if $\tilde{\Omega}$ is isometric to $B^n$$)$.
    \end{enumerate}
    Then $\Omega$ is isometric to $B^n$.
\end{corollary}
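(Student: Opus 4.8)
The proof of Corollary \ref{cor1.4} will follow by combining the two leading heat-trace coefficients from Theorem \ref{thm1.1} with the isoperimetric hypothesis. The plan is to read off $\operatorname{vol}(\Omega)$ and $\operatorname{vol}(\partial\Omega)$ as spectral invariants and then invoke the equality case of the isoperimetric inequality on $\mathcal{M}$.

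First I would observe that, by Theorem \ref{thm1.1}, the heat trace $\operatorname{Tr}(e^{t\mathcal{L}_g^{\mp}})$ is completely determined by the thermoelastic spectrum $\{\tau_k^{\mp}\}_{k\geqslant 1}$, hence so is each coefficient $a_j^{\mp}$ in its small-$t$ asymptotic expansion. In particular, from the explicit formulas
\begin{align*}
    a_0 &= \frac{\operatorname{vol}(\Omega)}{(4\pi)^{n/2}} \biggl(\frac{n-1}{\mu^{n/2}} + \frac{1}{(\lambda+2\mu)^{n/2}} + \frac{1}{\alpha^{n/2}}\biggr), \\
    a_1^{\mp} &= \mp \frac{1}{4}\frac{\operatorname{vol}(\partial \Omega)}{(4\pi)^{(n-1)/2}} \biggl(\frac{n-1}{\mu^{(n-1)/2}} + \frac{1}{(\lambda+2\mu)^{(n-1)/2}} + \frac{1}{\alpha^{(n-1)/2}}\biggr),
\end{align*}
and the fact that the bracketed factors are strictly positive constants depending only on $n,\lambda,\mu,\alpha$ (which are fixed material parameters shared by $\Omega$ and $B^n$), I conclude that $\operatorname{vol}(\Omega)$ is determined by $a_0$ and $\operatorname{vol}(\partial\Omega)$ is determined by $a_1^{\mp}$.

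Next, hypothesis $(a)$ says the thermoelastic spectra of $\Omega$ and $B^n$ agree (for the Dirichlet case, or for the Neumann case), so their heat traces coincide, and therefore $a_0(\Omega)=a_0(B^n)$ and $a_1^{\mp}(\Omega)=a_1^{\mp}(B^n)$. Dividing out the common positive constants yields $\operatorname{vol}(\Omega)=\operatorname{vol}(B^n)$ and $\operatorname{vol}(\partial\Omega)=\operatorname{vol}(\partial B^n)$. Now apply hypothesis $(b)$: since $\Omega$ is a compact subset of $\mathcal{M}$ with boundary and $\operatorname{vol}(\Omega)=\operatorname{vol}(B^n)$, the isoperimetric inequality gives $\operatorname{vol}(\partial\Omega)\geqslant\operatorname{vol}(\partial B^n)$, with equality if and only if $\Omega$ is isometric to $B^n$. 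But we have just shown $\operatorname{vol}(\partial\Omega)=\operatorname{vol}(\partial B^n)$, so the equality case forces $\Omega$ to be isometric to $B^n$, which completes the proof.

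The argument is essentially formal once Theorem \ref{thm1.1} is in hand, so there is no serious obstacle; the only point requiring a word of care is that the geometric constants multiplying the volumes in $a_0$ and $a_1^{\mp}$ are nonzero — this is immediate here since $n\geqslant 1$, $\mu>0$, $\lambda+2\mu>0$ and $\alpha>0$ make every summand positive — and that the equality clause in the isoperimetric hypothesis is exactly what upgrades the volume coincidence to an isometry. I would also remark that the same proof works verbatim whether one uses the Dirichlet spectrum or the Neumann spectrum, since $a_1^{-}$ and $a_1^{+}$ differ only by sign and either one recovers $\operatorname{vol}(\partial\Omega)$.
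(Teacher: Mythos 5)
Your proposal is correct and matches the paper's own proof essentially step for step: read off $\operatorname{vol}(\Omega)$ and $\operatorname{vol}(\partial\Omega)$ from the spectral invariants $a_0$ and $a_1^{\mp}$ of Theorem \ref{thm1.1}, deduce from hypothesis $(a)$ that these volumes coincide with those of $B^n$, and then invoke the equality case of the isoperimetric inequality in hypothesis $(b)$. The extra remark that the bracketed constants are strictly positive (so the volumes are genuinely determined) is a small but correct observation left implicit in the paper.
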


\begin{remark}
    There are many cases such that condition $(b)$ holds, we only list some of them as follows:
    \begin{enumerate}[{\rm (i)}]
        \item $\mathcal{M}$ is a space form, $($i.e., a complete Riemannian manifold with constant sectional curvature$)$, $($see {\rm \cite{Osserman78,Schmidt1,Schmidt2,Schmidt3,Schmidt4,Schmidt5}}$)$;
        \item $\mathcal{M} \subset \mathbb{R}^{n+m}\ (m=1,2)$ is a compact $n$-dimensional minimal submanifold with boundary $($see {\rm \cite{Brendle1}}$)$;
        \item $\mathcal{M}$ is a complete noncompact manifold with nonnegative Ricci curvature $($see {\rm \cite{Brendle2}}$)$;
        \item $\mathcal{M}$ is a compact $n$-dimensional minimal submanifold with boundary in a complete noncompact manifold of dimension $n+m\ (m=1,2)$ with nonnegative sectional curvature $($see {\rm \cite{Brendle2}}$)$;
        \item $\mathcal{M}$ is a Cartan--Hadamard manifold of dimension $n$ $($i.e., a complete, simply connected Riemannian manifold with nonpositive sectional curvature $K)$ for $n=2$ $($see {\rm \cite{Weil79,BeckRado33}}$)$, $n=3$ $($see {\rm \cite{Kleiner92}}$)$, $n=4$ $($see {\rm \cite{Croke84}}$)$. Generally, the isoperimetric inequality also holds when $K\leqslant k \leqslant 0$ for $n=2$ $($see {\rm\cite{Bol41}}$)$, $n=3$ $($see {\rm\cite{Kleiner92}}$)$.
    \end{enumerate}
\end{remark}

\addvspace{2mm}

The main ideas of this paper are as follows. Firstly, we denote by $(e^{t\mathcal{L}_g^{\mp}})_{t\geqslant 0}$ the parabolic semigroups generated by $\mathcal{L}_g^{\mp}$. More precisely, $\textbf{\textit{V}}^{\mp}(t,x)=e^{t\mathcal{L}_g^{\mp}}\textbf{\textit{V}}(x)$ solve the following initial-boundary problems, respectively,
\begin{align*}
    \begin{cases}
        \frac{\partial \textbf{\textit{V}}^{-}}{\partial t} - \mathcal{L}_g \textbf{\textit{V}}^{-} = 0 & \text{in}\ (0,+\infty)\times\Omega,\\
        \textbf{\textit{V}}^{-} = 0 \quad & \text{on}\ (0,+\infty)\times \partial\Omega,\\
        \textbf{\textit{V}}^{-}(0,x) = \textbf{\textit{V}}_0 & \text{on}\ \{0\}\times\Omega
    \end{cases}
\end{align*}
and
\begin{align*}
    \begin{cases}
        \frac{\partial \textbf{\textit{V}}^{+}}{\partial t} - \mathcal{L}_g \textbf{\textit{V}}^{+} = 0  & \text{in}\ (0,+\infty)\times\Omega,\\
        N(\textbf{\textit{V}}^{+}) = 0 \quad& \text{on}\ (0,+\infty)\times \partial\Omega,\\
        \textbf{\textit{V}}^{+}(0,x) = \textbf{\textit{V}}_0 & \text{on}\ \{0\}\times\Omega.
    \end{cases}
\end{align*}
Let $\{\textbf{\textit{U}}^{\mp}_k\}_{k \geqslant 1}$ be the orthonormal eigenvectors corresponding to the eigenvalues $\{\tau^{\mp}_k\}_{k \geqslant 1}$, then the integral kernels $\textbf{\textit{K}}^{\mp}(t,x,y)$ are given by
\begin{align*}
    \textbf{\textit{K}}^{\mp}(t,x,y) = \sum_{k=1}^{\infty} e^{-t\tau^{\mp}_k} \textbf{\textit{U}}^{\mp}_k(x) \otimes \textbf{\textit{U}}^{\mp}_k(y).
\end{align*}
Thus,
\begin{align*}
    \int_{\Omega} \operatorname{Tr} \textbf{\textit{K}}^{\mp}(t,x,y) \,dV = \sum_{k=1}^{\infty} e^{-t\tau^{\mp}_k}
\end{align*}
are spectral invariants.

Furthermore, to investigate the geometric aspects of the spectrum and the asymptotic expansion of the integrals, we combine calculus of symbols (see \cite{Gilkey75,Grubb86,Seeley69}) and “method of images” (see \cite{Liu_NL21,McKeanSinger67}). Let $M = \Omega \cup \partial \Omega \cup \Omega^{*}$ be the closed double of $\Omega$, and $L_g$ the double to $M$ of the operator $\mathcal{L}_g$. Then $L_g$ generates a strongly continuous semigroup $(e^{tL_g})_{t\geqslant 0}$ on $L^2(M)$ with integral kernel $\textbf{\textit{K}}(t,x,y)$. Clearly, $\textbf{\textit{K}}^{\mp}(t,x,y) = \textbf{\textit{K}}(t,x,y) \mp \textbf{\textit{K}}(t,x,y^*)$ for $x,y \in \bar{\Omega}$, where $y^*$ is the double of $y\in \Omega$. Since
\begin{align*}
    e^{tL_g} f(x) = \frac{1}{2\pi i}\int_{\mathcal{C}} e^{-t\tau}(\tau I+L_g)^{-1} f(x) \,d\tau,
\end{align*}
where $\mathcal{C}$ is a suitable curve in the complex plane in the positive direction around the spectrum of $L_g$. It follows that
\begin{align*}
    \textbf{\textit{K}}(t,x,y) = \frac{1}{(2\pi)^{n}} \int_{\mathbb{R}^n} e^{i (x-y)\cdot\xi} 
    \bigg(
        \frac{1}{2\pi i}\int_{\mathcal{C}} e^{-t\tau} \sigma\big((\tau I+L_g)^{-1}\big) \,d\tau
    \bigg) \,d\xi,
\end{align*}
where $\sigma\big((\tau I+L_g)^{-1}\big)$ is the full symbol of $(\tau I+L_g)^{-1}$. In particular, we get $\operatorname{Tr} \textbf{\textit{K}}(t,x,x)$ and $\operatorname{Tr} \textbf{\textit{K}}(t,x,x^*)$ for any $x\in \Omega$. Finally, we explicitly calculate the first two coefficients $a_0$ and $a_1$ of the asymptotic expansion
\begin{align*}
    \int_{\Omega} \operatorname{Tr}\textbf{\textit{K}}^{\mp}(t,x,x)\,dV
    = a_0t^{-n/2} \mp a_1t^{-(n-1)/2}+O(t^{1-n/2}) \quad \text{as}\ t \to 0^+,
\end{align*}
where $a_0$ and $a_1$ are given in Theorem \ref{thm1.1}.

This paper is organized as follows. In Section \ref{s2} we give the expression of the thermoelastic operator in local coordinates and derive the full symbol of the resolvent operator $(\tau I + \mathcal{L}_g)^{-1}$. Section \ref{s3} is devoted to prove Theorem \ref{thm1.1} and Corollary \ref{cor1.4}.

\addvspace{10mm}

\section{Full Symbol of The Resolvent Operator}\label{s2}

\addvspace{5mm}

In the local coordinates $\{x_i\}_{i=1}^n$, we denote by $\bigl\{\frac{\partial}{\partial x_i}\bigr\}_{i=1}^n$ a natural basis for the tangent space $T_x \Omega$ at the point $x \in \Omega$. In what follows, we will use the Einstein summation convention, the Roman indices run from 1 to $n$, unless otherwise specified. Then the Riemannian metric is given by $g = g_{ij} \,dx_i\, dx_j$. Let $\nabla$ be the Levi-Civita connection on $\Omega$, $\nabla_i = \nabla_{\frac{\partial}{\partial x_i}}$ and $\nabla^i= g^{ij} \nabla_j$. The gradient operator is denoted by
\begin{equation}\label{2.1}
    \operatorname{grad} f = \nabla^i f \frac{\partial}{\partial x_i}
    = g^{ij} \frac{\partial f}{\partial x_i} \frac{\partial}{\partial x_j},\quad f \in C^{\infty}(\Omega),
\end{equation}
where $(g^{ij}) = (g_{ij})^{-1}$. The divergence operator is denoted by
\begin{equation}\label{2.2}
    \operatorname{div} \textbf{\textit{u}} = \nabla_i u^i = \frac{\partial u^i}{\partial x_i} + \Gamma^j_{ij} u^i,\quad \textbf{\textit{u}}=u^i \frac{\partial}{\partial x_i}\in \mathfrak{X}  (\Omega),
\end{equation}
where the Christoffel symbols $\Gamma^{k}_{ij} = \frac{1}{2} g^{kl} \bigl(\frac{\partial g_{jl}}{\partial x_i} + \frac{\partial g_{il}}{\partial x_j} - \frac{\partial g_{ij}}{\partial x_l}\bigr)$. It follows from \eqref{2.1} and \eqref{2.2} that
\begin{align}\label{2.3}
    \operatorname{grad}\operatorname{div} \textbf{\textit{u}} =  g^{ij}
    \Bigl( \frac{\partial^2 u^k}{\partial x_j \partial x_k} +  \Gamma^l_{kl}\frac{\partial u^k}{\partial x_j} +  \frac{\partial \Gamma^l_{kl}}{\partial x_j} u^k \Bigr) \frac{\partial}{\partial x_i}.
\end{align}

Recall that the components of the curvature tensor
\begin{align}\label{2.4}
    R^{l}_{ijk} 
    = \frac{\partial \Gamma^{l}_{jk}}{\partial x_i} - \frac{\partial \Gamma^{l}_{ik}}{\partial x_j} + \Gamma^{h}_{jk} \Gamma^{l}_{ih} - \Gamma^{h}_{ik} \Gamma^{l}_{jh},
\end{align}
and $R_{ijkl} = g_{lm} R^{m}_{ijk}$, the components of the the Ricci tensor
\begin{align}\label{2.5}
    R_{ij} = g^{kl} R_{iklj}.
\end{align}
The Bochner Laplacian is given by
\begin{align*}
    \Delta_{B}\textbf{\textit{u}} =(\nabla^j \nabla_j u^i)\frac{\partial}{\partial x_i} ,\quad \textbf{\textit{u}}=u^i \frac{\partial}{\partial x_i}\in \mathfrak{X}  (\Omega).
\end{align*}
Then combining \eqref{2.4} and \eqref{2.5}, we compute
\begin{align}\label{2.6}
    (\Delta_{B}\textbf{\textit{u}})^i 
    & = \Delta_g u^i + g^{kl} 
    \Bigl(
        2\Gamma^i_{jk}\frac{\partial u^j}{\partial x_l} + \frac{\partial \Gamma^i_{jk}}{\partial x_l}u^j + (\Gamma^i_{hl}\Gamma^h_{jk} - \Gamma^h_{kl}\Gamma^i_{jh}) u^j
    \Bigr) \\
    & = \Delta_g u^i - \operatorname{Ric}(\textbf{\textit{u}})^i + g^{kl} \Bigl( 2\Gamma^i_{jk} \frac{\partial u^j}{\partial x_l} + \frac{\partial \Gamma^i_{kl}}{\partial x_j} u^j \Bigr), \notag
\end{align}
where $\operatorname{Ric}(\textbf{\textit{u}})^i = g^{ij}R_{jk} u^k$ and the Laplace--Beltrami operator is given by
\begin{equation}\label{2.7}
    \Delta_{g} f
    = g^{ij}\Bigl(\frac{\partial^2 f}{\partial x_i \partial x_j} - \Gamma^k_{ij}\frac{\partial f}{\partial x_k}\Bigr), \quad f \in C^{\infty}(\Omega).
\end{equation}

\addvspace{2mm}

For the sake of simplicity, we denote by $I_n$ the $n\times n$ identity matrix and
\begin{align*}
    \begin{pmatrix}
        (a^j_k) & (b^j) \\
        (c_k) & d
    \end{pmatrix}
    :=
    \begin{pmatrix}
        a^1_1 & \dots & a^1_n & b^1 \\
        \vdots & \ddots & \vdots & \vdots \\
        a^n_1 & \dots & a^n_n & b^n \\
        c_1 & \dots & c_n & d \\
    \end{pmatrix}.
\end{align*}
According to \eqref{2.1}, \eqref{2.2}, \eqref{2.3}, \eqref{2.6} and \eqref{2.7} (cf. \cite{Liu_NL21}), we can write \eqref{1.2} in local coordinates as follows:
\begin{align*}
    \mathcal{L}_g &=
    \begin{pmatrix}
            \mu I_n  & 0 \\
            0 & \alpha
    \end{pmatrix}
    g^{ml}\frac{\partial^2 }{\partial x_m \partial x_l}
    + (\lambda+\mu)
    \begin{pmatrix}
            \big(g^{jm}\frac{\partial^2 }{\partial x_k \partial x_m}\big) & 0 \\
            0 & 0
    \end{pmatrix}
    -
    \begin{pmatrix}
            \mu I_n  & 0 \\
            0 & \alpha
    \end{pmatrix}
    g^{ml}\Gamma^s_{ml}\frac{\partial }{\partial x_s} \\
    &\quad + 2\mu
    \begin{pmatrix}
        \big(g^{ml}\Gamma^j_{mk}\frac{\partial }{\partial x_l}\big) & 0 \\
        0 & 0
    \end{pmatrix}
    + (\lambda+\mu)
    \begin{pmatrix}
        \big(g^{jm}\Gamma^l_{kl}\frac{\partial }{\partial x_m}\big) & 0 \\
        0 & 0
    \end{pmatrix}
    + \beta
    \begin{pmatrix}
        0 & -\big(g^{jk}\frac{\partial }{\partial x_k}\big) \\
        i\omega\theta_0 \big(\frac{\partial }{\partial x_k}\big) & 0
    \end{pmatrix} \\
    &\quad+ (\lambda+\mu)
    \begin{pmatrix}
        \big(g^{jm}\frac{\partial \Gamma^l_{kl}}{\partial x_m}\big) & 0 \\
        0 & 0
    \end{pmatrix}
    + \mu
    \begin{pmatrix}
        \big(g^{ml}\frac{\partial \Gamma^j_{ml}}{\partial x_k}\big) & 0 \\
        0 & 0
    \end{pmatrix}
    +
    \begin{pmatrix}
        \rho\omega^2 I_n & 0 \\
        i\omega\beta\theta_0 (\Gamma^l_{kl}) & i\omega \gamma
    \end{pmatrix}.
\end{align*}
Moreover, we have
\begin{align*}
    \mathcal{L}_g \textbf{\textit{U}}(x) = \frac{1}{(2\pi)^n}\int_{\mathbb{R}^n}e^{ix\cdot \xi}\textbf{\textit{C}}_g(x,\xi)\hat{\textbf{\textit{U}}}(\xi)\,d\xi,
\end{align*}
where
\begin{align*}
    \textbf{\textit{C}}_g(x,\xi) &=
    -|\xi|^2
    \begin{pmatrix}
            \mu I_n  & 0 \\
            0 & \alpha
    \end{pmatrix}
    - (\lambda+\mu)
    \begin{pmatrix}
            (g^{jm}\xi_k\xi_m) & 0 \\
            0 & 0
    \end{pmatrix}
    - ig^{ml}\Gamma^s_{ml}\xi_s
    \begin{pmatrix}
            \mu I_n  & 0 \\
            0 & \alpha
    \end{pmatrix} \\
    &\quad + 2i\mu
    \begin{pmatrix}
        (g^{ml}\Gamma^j_{mk}\xi_l) & 0 \\
        0 & 0
    \end{pmatrix}
    + i(\lambda+\mu)
    \begin{pmatrix}
        (g^{jm}\Gamma^l_{kl}\xi_m) & 0 \\
        0 & 0
    \end{pmatrix}
    - \beta
    \begin{pmatrix}
        0 & i(g^{jk}\xi_k) \\
        \omega\theta_0 (\xi_k) & 0
    \end{pmatrix} \\
    &\quad+ (\lambda+\mu)
    \begin{pmatrix}
        \big(g^{jm}\frac{\partial \Gamma^l_{kl}}{\partial x_m}\big) & 0 \\
        0 & 0
    \end{pmatrix}
    + \mu
    \begin{pmatrix}
        \big(g^{ml}\frac{\partial \Gamma^j_{ml}}{\partial x_k}\big) & 0 \\
        0 & 0
    \end{pmatrix}
    +
    \begin{pmatrix}
        \rho\omega^2 I_n & 0 \\
        i\omega\beta\theta_0 (\Gamma^l_{kl}) & i\omega \gamma
    \end{pmatrix},
\end{align*}
and $|\xi|^2 = g^{ij}\xi_i\xi_j$. For each $\tau \in \mathbb{C}$, we denote $\tau I_n + \textbf{\textit{C}}_g = \textbf{\textit{c}}_2 + \textbf{\textit{c}}_1 + \textbf{\textit{c}}_0$, where
\begin{align}
    \textbf{\textit{c}}_2 & =
    \begin{pmatrix}
            (\tau - \mu|\xi|^2) I_n  & 0 \\
            0 & \tau - \alpha|\xi|^2
    \end{pmatrix}
    - (\lambda+\mu)
    \begin{pmatrix}
            (g^{jm}\xi_k\xi_m) & 0 \\
            0 & 0
    \end{pmatrix}, \label{3.1}\\
    \textbf{\textit{c}}_1 & =
    - ig^{ml}\Gamma^s_{ml}\xi_s
    \begin{pmatrix}
            \mu I_n  & 0 \\
            0 & \alpha
    \end{pmatrix}
    + 2i\mu
    \begin{pmatrix}
        (g^{ml}\Gamma^j_{mk}\xi_l) & 0 \\
        0 & 0
    \end{pmatrix}
    + i(\lambda+\mu)
    \begin{pmatrix}
        (g^{jm}\Gamma^l_{kl}\xi_m) & 0 \\
        0 & 0
    \end{pmatrix} \label{3.2}\\
    &\quad - \beta
    \begin{pmatrix}
        0 & i(g^{jk}\xi_k) \\
        \omega\theta_0 (\xi_k) & 0
    \end{pmatrix}, \notag\\
    \textbf{\textit{c}}_0 & =
    (\lambda+\mu)
    \begin{pmatrix}
        \big(g^{jm}\frac{\partial \Gamma^l_{kl}}{\partial x_m}\big) & 0 \\
        0 & 0
    \end{pmatrix}
    + \mu
    \begin{pmatrix}
        \big(g^{ml}\frac{\partial \Gamma^j_{ml}}{\partial x_k}\big) & 0 \\
        0 & 0
    \end{pmatrix}
    +
    \begin{pmatrix}
        \rho\omega^2 I_n & 0 \\
        i\omega\beta\theta_0 (\Gamma^l_{kl}) & i\omega \gamma
    \end{pmatrix}.\label{3.3}
\end{align}

\addvspace{2mm}

Now we calculate the full symbol of the operator $(\tau I + \mathcal{L}_g)^{-1}$. Let $B(x,\xi,\tau)$ be a two-sided parametrix for $\tau I + \mathcal{L}_g$, that is, $B(x,\xi,\tau)$ is a pseudodifferential operator of order $-2$ with parameter $\tau$ for which
\begin{align*}
    & B(\tau)(\tau I + \mathcal{L}_g) = I \mod OPS^{-\infty}, \\
    & (\tau I + \mathcal{L}_g)B(\tau) = I \mod OPS^{-\infty}.
\end{align*}
Recall that if $P_1$ and $P_2$ are two pseudodifferential operators with full symbols $p_1$ and $p_2$, respectively, then the full symbol $\sigma(P_1P_2)$ is given by (see p.\,11 of  \cite{Taylor11.2}, or p.\,71 of \cite{Hormander85.3}, see also \cite{Grubb86,Treves80})
\begin{align*}
    \sigma(P_1P_2)\sim \sum_{J} \frac{1}{J !} \partial_{\xi}^{J} p_1 D_{x}^{J} p_2,
\end{align*}
where the sum is over all multi-indices $J$. Let $\sigma(B(x,\xi,\tau))\sim \sum_{j \leqslant -2} \textbf{\textit{b}}_j(x,\xi,\tau)$ be the full symbol of $B(x,\xi,\tau)$. We then get
\begin{align*}
    I_n & = \sum_{0=j+|J|+2-k} \frac{1}{J !} \partial_{\xi}^{J} \textbf{\textit{c}}_k D_{x}^{J} \textbf{\textit{b}}_{-2-j} = \textbf{\textit{c}}_2\textbf{\textit{b}}_{-2}, \\
    0 & = \sum_{1\leqslant l=j+|J|+2-k} \frac{1}{J !} \partial_{\xi}^{J} \textbf{\textit{c}}_k D_{x}^{J} \textbf{\textit{b}}_{-2-j} \\
    & = \textbf{\textit{c}}_2\textbf{\textit{b}}_{-2-l} + \sum_{\substack{l=j+|J|+2-k \\ j<l}} \frac{1}{J !} \partial_{\xi}^{J} \textbf{\textit{c}}_k D_{x}^{J} \textbf{\textit{b}}_{-2-j}, \quad l\geqslant 1.
\end{align*}
Therefore,
\begin{align*}
    \textbf{\textit{b}}_{-2}& = \textbf{\textit{c}}_2^{-1}, \\
    \textbf{\textit{b}}_{-2-l}& = -\textbf{\textit{c}}_2^{-1} \sum_{\substack{l=j+|J|+2-k \\ j<l}} \frac{1}{J !} \partial_{\xi}^{J} \textbf{\textit{c}}_k D_{x}^{J} \textbf{\textit{b}}_{-2-j}, \quad l\geqslant 1.
\end{align*}
In particular, by \eqref{3.1} we have
\begin{align*}
    \textbf{\textit{b}}_{-2} =
    \begin{pmatrix}
        \frac{1}{\tau - \mu|\xi|^2}I_n & 0 \\
        0 & \frac{1}{\tau - \alpha|\xi|^2}
    \end{pmatrix}
    + \frac{\lambda + \mu}{(\tau - \mu|\xi|^2)(\tau - (\lambda + 2\mu)|\xi|^2)}
    \begin{pmatrix}
        (g^{jm}\xi_k\xi_m) & 0 \\
        0 & 0
    \end{pmatrix},
\end{align*}
and
\begin{align}\label{3.4}
    \operatorname{Tr} \textbf{\textit{b}}_{-2} = \frac{n-1}{\tau - \mu|\xi|^2} + \frac{1}{\tau - \alpha|\xi|^2} + \frac{1}{\tau - (\lambda + 2\mu)|\xi|^2}.
\end{align}

\addvspace{10mm}

\section{Coefficients of the Asymptotic Expansions}\label{s3}

\addvspace{5mm}

\begin{proof}[Proof of Theorem {\rm\ref{thm1.1}}]

According to the theory of elliptic operators (see \cite{Morrey66,Morrey58.1,Morrey58.2,Pazy83,Stewart74}), we see that the thermoelastic operator $\mathcal{L}_g$ can generate strongly continuous semigroups $(e^{t\mathcal{L}_g^{\mp}})_{t\geqslant 0}$ with respect to the Dirichlet and Neumann boundary conditions, respectively. Moreover, there exist matrix-valued functions (integral kernels) $\textbf{\textit{K}}^{\mp}(t,x,y)$ such that (see \cite{Browder60} or p.\,4 of \cite{Fried64})
\begin{align*}
    e^{t\mathcal{L}_g^{\mp}} \textbf{\textit{V}}_0(x) = \int_{\Omega} \textbf{\textit{K}}^{\mp}(t,x,y)\textbf{\textit{V}}_0(y) \,dy, \quad \textbf{\textit{V}}_0(x)\in(L^2(\Omega))^{n+1}.
\end{align*}
Let $\{\textbf{\textit{U}}^{\mp}_k\}_{k \geqslant 1}$ be the orthonormal eigenvectors corresponding to the eigenvalues $\{\tau^{\mp}_k\}_{k \geqslant 1}$, then the integral kernels $\textbf{\textit{K}}^{\mp}(t,x,y)$ are given by
\begin{align*}
    \textbf{\textit{K}}^{\mp}(t,x,y)
    = e^{t\mathcal{L}_g^{\mp}} \delta(x - y)
    = \sum_{k=1}^{\infty} e^{-t\tau^{\mp}_k} \textbf{\textit{U}}^{\mp}_k(x) \otimes \textbf{\textit{U}}^{\mp}_k(y).
\end{align*}
This implies that
\begin{align*}
    \int_{\Omega} \operatorname{Tr} \textbf{\textit{K}}^{\mp}(t,x,y) \,dV
    = \sum_{k=1}^{\infty} e^{-t\tau^{\mp}_k}
\end{align*}
are spectral invariants.

We will combine calculus of symbols (see \cite{Seeley69}) and “method of images” (see \cite{Liu_NL21,McKeanSinger67}), to deal with the asymptotic expansions of traces of the integral kernels. Let $M = \Omega \cup \partial \Omega \cup \Omega^{*}$ be the closed double of $\Omega$, and $L_g$ the double to $M$ of the operator $\mathcal{L}_g$. The coefficients of the operator $L_g$ jump as $x$ crosses $\partial \Omega$, but $\frac{\partial \textbf{\textit{U}}}{\partial t} = L_g \textbf{\textit{U}}$ still has a nice fundamental solution (integral kernel) $\textbf{\textit{K}}(t,x, y)$ of class $C^{\infty}((0,+\infty) \times(M \setminus \partial \Omega) \times(M \setminus \partial \Omega)) \cap C^{1}((0,+\infty)\times M \times M)$, approximable even on $\partial \Omega$, and the integral kernels $\textbf{\textit{K}}^{\mp}(t,x,y)$ can be expressed on $(0,+\infty) \times \Omega \times \Omega$ as
\begin{align*}
    \textbf{\textit{K}}^{\mp}(t,x,y) = \textbf{\textit{K}}(t,x,y) \mp \textbf{\textit{K}}(t,x,y^*),
\end{align*}
where $y^*$ is the double of $y\in \Omega$. Since $(e^{tL_g})_{t\geqslant 0}$ can also be represented as
\begin{align*}
    e^{tL_g}
    = \frac{1}{2\pi i}\int_{\mathcal{C}} e^{-t\tau}(\tau I+L_g)^{-1} \,d\tau,
\end{align*}
where $\mathcal{C}$ is a suitable curve in the complex plane in the positive direction around the spectrum of $L_g$, that is, $\mathcal{C}$ is a contour around the positive real axis. It follows that, for any $x,y \in M$,
\begin{align*}
    \textbf{\textit{K}}(t,x,y) = e^{tL_g} \delta(x - y) = \frac{1}{(2\pi)^{n}} \int_{\mathbb{R}^n} e^{i (x-y)\cdot\xi} 
    \bigg(
        \frac{1}{2\pi i}\int_{\mathcal{C}} e^{-t\tau} \sigma\big((\tau I+L_g)^{-1}\big) \,d\tau
    \bigg) \,d\xi.
\end{align*}
In particular, for any $x\in \Omega$,
\begin{align*}
    \textbf{\textit{K}}(t,x,x) 
    & = \frac{1}{(2\pi)^{n}} \int_{\mathbb{R}^{n}}
    \bigg(
        \frac{1}{2\pi i}\int_{\mathcal{C}} e^{-t\tau} \sum_{l \geqslant 0} \textbf{\textit{b}}_{-2-l}\,d\tau 
    \bigg) \,d\xi, \\
    \textbf{\textit{K}}(t,x,x^*) 
    & = \frac{1}{(2\pi)^{n}} \int_{\mathbb{R}^{n}} e^{i (x-x^*)\cdot\xi}
    \bigg(
        \frac{1}{2\pi i}\int_{\mathcal{C}} e^{-t\tau} \sum_{l \geqslant 0} \textbf{\textit{b}}_{-2-l} \,d\tau 
    \bigg) \,d\xi.
\end{align*}
By applying the residue theorem to \eqref{3.4}, we get
    \begin{align*}
        \frac{1}{2\pi i}\int_{\mathcal{C}} e^{-t\tau} \operatorname{Tr} \textbf{\textit{b}}_{-2} \,d\tau 
        = (n-1)e^{-t\mu|\xi|^2} + e^{-t\alpha|\xi|^2} + e^{-t(\lambda + 2\mu)|\xi|^2}.
    \end{align*}
    We use the geodesic normal coordinate system centered at $x\in\Omega$, then $g_{ij}(x)=\delta_{ij}$ and $\Gamma^k_{ij}(x)=0$. It follows that
    \begin{align*}
        & \int_{\Omega}
        \biggl[\frac{1}{(2\pi)^{n}} \int_{\mathbb{R}^{n}}
        \bigg(
            \frac{1}{2\pi i}\int_{\mathcal{C}} e^{-t\tau} 
            \operatorname{Tr} \textbf{\textit{b}}_{-2}\,d\tau 
        \bigg) \,d\xi\biggr]\,dV 
        = \frac{\operatorname{vol}(\Omega)}{(4\pi t)^{n/2}}
        \biggl(\frac{n-1}{\mu^{n/2}} + \frac{1}{\alpha^{n/2}} + \frac{1}{(\lambda+2\mu)^{n/2}}\biggr).
    \end{align*}
    
    \addvspace{3mm}

    For given $\varepsilon > 0$, denote by $U_{\varepsilon}(\partial\Omega) = \{z\in M| \text{dist}(z,\partial\Omega) < \varepsilon\}$ the $\varepsilon$-neighborhood of $\partial\Omega$ in $M$.

    \addvspace{1mm}

    (i) For any $x\in\Omega \setminus U_{\varepsilon}(\partial\Omega)$, we have
    \begin{align*}
        &\frac{1}{(2\pi)^{n}} \int_{\mathbb{R}^{n}} e^{i(x-x^*)\cdot \xi}
        \bigg(
            \frac{1}{2\pi i}\int_{\mathcal{C}} e^{-t\tau} \operatorname{Tr} \textbf{\textit{b}}_{-2}\,d\tau
        \bigg) \,d\xi \\
        &\quad = \frac{1}{(4\pi t)^{n/2}} \biggl(\frac{n-1}{\mu^{n/2}} e^{-\frac{|x-x^*|^2}{4\mu t}} + \frac{1}{\alpha^{n/2}} e^{-\frac{|x-x^*|^2}{4\alpha t}} + \frac{1}{(\lambda+2\mu)^{n/2}} e^{-\frac{|x-x^*|^2}{4 (\lambda+2\mu) t}}\biggr),
    \end{align*}
    which tends to zero as $t\to 0^+$ since $|x-x^*| \geqslant \varepsilon$. Integrating the above result over $\Omega \setminus U_{\varepsilon}(\partial\Omega)$, we get $O(t^{1-n/2})$ as $t\to 0^+$.
    
    For $l \geqslant 1$, it can be verified that $\operatorname{Tr}\textbf{\textit{b}}_{-2-l}$ is a sum of finitely many terms, each of which has the form
    \begin{align*}
        \frac{r_k(x,\xi)}{(\tau - \mu|\xi|^2)^j (\tau - \alpha|\xi|^2)^m (\tau - (\lambda+2\mu)|\xi|^2)^p},
    \end{align*}
    where $k-2j-2m-2p=-2-l$, and $r_k(x,\xi)$ is the symbol independent of $\tau$ and homogeneous of degree $k$. Hence
    \begin{align*}
        \int_{\Omega}
        \biggl[\frac{1}{(2\pi)^{n}} \int_{\mathbb{R}^{n}}
        \bigg(
            \frac{1}{2\pi i}\int_{\mathcal{C}} e^{-t\tau} 
            \sum_{l\geqslant 1}\operatorname{Tr} \textbf{\textit{b}}_{-2-l} \,d\tau 
        \bigg) \,d\xi\biggr] dV
        & = O(t^{1-n/2}) \quad \text{as}\ t \to 0^+, \\
        \int_{\Omega}
        \biggl[\frac{1}{(2\pi)^{n}} \int_{\mathbb{R}^{n}} e^{i(x-x^*)\cdot \xi}
        \bigg(
            \frac{1}{2\pi i}\int_{\mathcal{C}} e^{-t\tau} 
            \sum_{l\geqslant 1}\operatorname{Tr} \textbf{\textit{b}}_{-2-l} \,d\tau 
        \bigg) \,d\xi\biggr] dV
        & = O(t^{1-n/2}) \quad \text{as}\ t \to 0^+.
    \end{align*}
    Combining the above results, we observe
    \begin{align*}
        \int_{\Omega} \operatorname{Tr}\textbf{\textit{K}}(t,x,x)\,dV 
        = \frac{\operatorname{vol}(\Omega)}{(4\pi t)^{n/2}}
        \biggl(\frac{n-1}{\mu^{n/2}} + \frac{1}{\alpha^{n/2}} + \frac{1}{(\lambda+2\mu)^{n/2}}\biggr) + O(t^{1-n/2}) \quad \text{as}\ t \to 0^+.
    \end{align*}

    \addvspace{1mm}

    (ii) We consider the case when $x\in\Omega \cap U_{\varepsilon}(\partial\Omega)$. Inspired by \cite{Liu_NL21,McKeanSinger67}, we pick a self-double patch $W$ of $M$ (such that $W \subset U_{\varepsilon}(\partial\Omega)$) covering a patch $W \cap \partial\Omega$ endowed with local coordinates $x$ such that $\varepsilon > x_n >0 $ in $W\cap\Omega$, $x_n=0$ on $W \subset U_{\varepsilon}(\partial\Omega)$, $x_n(x^*)=-x_n(x)$, and the positive $x_n$-direction is perpendicular to $\partial\Omega$ (see Figure \ref{fig1}). 
    
    \addvspace{-10pt}
    \begin{figure}[h]
        \centering
        \begin{tikzpicture}[scale=0.8,line width=1]
            \draw (0.03,0.5) arc (220:320:0.9);
            \draw (1.41,0.5) arc (135:45:1.4);
            \draw (3.39,0.5) arc (225:335:1.5);
            \draw (0.7,1.5) arc (135:45:2.5);
            \draw (0.7,1.5)--(1.8,-0.3);
            \draw (1.8,-0.3) arc (135:45:0.8);
            \draw (2.91,-0.3)--(4.22,1.5);
            \draw (2.91,-0.3)--(4.22,1.5);
            \draw (5.8,0.9) arc (-20:211: 3.2 and 2.5);
            \draw[->,>=stealth] (2.4,0.2) .. controls (2.4,1.3) and (2.4,2.3) .. (2.4,3.5);
            \draw[fill=black] (2.4,0.2) circle (1pt);
            \draw[fill=black] (2.4,1.65) circle (1pt);
            \node at (2.4,-0.5) {$\Omega^{*}$};
            \node at (3.2,3) {$x_n>0$};
            \node at (2.7,0.2) {$x^{*}$};
            \node at (2.7,1.65) {$x$};
            \node at (1.5,2.8) {$\Omega$};
            \node at (4.5,0.3) {$\partial\Omega$};
        \end{tikzpicture}

        \addvspace{-10pt}
        \caption{}\label{fig1}
    \end{figure}

    This has the effect that
    \begin{align*}
        g_{jk}(x^*) & =-g_{jk}(x) \quad \text{for}\ j<k=n\ \text{or}\ k<j=n,\\
        & =g_{jk}(x) \quad \text{for}\ j,k=n\ \text{or}\ j=k=n,\\
        g_{jk}(x) & =0 \quad \text{for}\ j<k=n\ \text{or}\ k<j=n\ \text{on}\ \partial\Omega,\\
        \sqrt{\operatorname{det}g/g_{nn}}\,dx_1\cdots dx_{n-1} &= \text{the element of Riemannian surface area on}\ \partial\Omega.
    \end{align*}
    We choose the geodesic normal coordinate system on $\partial\Omega$ (see \cite{LeeUhlm89} or p.\,532 of \cite{Taylor11.2}) such that the metric has the form $g = \sum_{\alpha,\beta=1}^{n-1} g_{\alpha\beta} \,dx_{\alpha} \,dx_{\beta} + dx_{n}^{2}$. Then we see that for $x\in \{z\in \Omega| \text{dist}(z,\partial\Omega) < \varepsilon\}$,
    \begin{align*}
        x-x^*=\big(0,\dots,0,x_n-(-x_n)\big)=(0,\dots,0,2x_n).
    \end{align*}
    We have
    \begin{align*}
        &\int_{W\cap\Omega}
        \biggl[\frac{1}{(2\pi)^{n}} \int_{\mathbb{R}^{n}} e^{i(x-x^*)\cdot \xi}
        \bigg(
            \frac{1}{2\pi i}\int_{\mathcal{C}} e^{-t\tau} 
            \operatorname{Tr} \textbf{\textit{b}}_{-2} \,d\tau 
        \bigg) \,d\xi\biggr] dV \\
        &\quad = \frac{1}{(4\pi t)^{n/2}} \int_0^{\varepsilon} dx_n \int_{W\cap\partial\Omega}
        \biggl(
            \frac{n-1}{\mu^{n/2}}e^{-\frac{x_n^2}{\mu t}} + \frac{1}{\alpha^{n/2}}e^{-\frac{x_n^2}{\alpha t}}  + \frac{1}{(\lambda+2\mu)^{n/2}}e^{-\frac{x_n^2}{(\lambda+2\mu) t}}
        \biggr)
        dx^{\prime} \\
        &\quad = \frac{1}{(4\pi t)^{n/2}} \int_0^{+\infty} dx_n \int_{W\cap\partial\Omega}
        \biggl(
            \frac{n-1}{\mu^{n/2}}e^{-\frac{x_n^2}{\mu t}} + \frac{1}{\alpha^{n/2}}e^{-\frac{x_n^2}{\alpha t}}  + \frac{1}{(\lambda+2\mu)^{n/2}}e^{-\frac{x_n^2}{(\lambda+2\mu) t}}
        \biggr)
        dx^{\prime} \\
        &\qquad -\frac{1}{(4\pi t)^{n/2}} \int_{\varepsilon}^{+\infty} dx_n \int_{W\cap\partial\Omega}
        \biggl(
            \frac{n-1}{\mu^{n/2}}e^{-\frac{x_n^2}{\mu t}} + \frac{1}{\alpha^{n/2}}e^{-\frac{x_n^2}{\alpha t}}  + \frac{1}{(\lambda+2\mu)^{n/2}}e^{-\frac{x_n^2}{(\lambda+2\mu) t}}
        \biggr)
        dx^{\prime} \\
        &\quad = \frac{1}{4} \frac{\operatorname{vol}(W\cap \partial \Omega)}{(4\pi t)^{(n-1)/2}} \biggr(\frac{n-1}{\mu^{(n-1)/2}} + \frac{1}{(\lambda+2\mu)^{(n-1)/2}} + \frac{1}{\alpha^{(n-1)/2}}\biggr)  \\
        &\qquad - \frac{1}{(4\pi t)^{n/2}} \int_{W\cap\partial\Omega} dx^{\prime} \int_{\varepsilon}^{+\infty}  
        \biggl(
            \frac{n-1}{\mu^{n/2}}e^{-\frac{x_n^2}{\mu t}} + \frac{1}{\alpha^{n/2}}e^{-\frac{x_n^2}{\alpha t}} + \frac{1}{(\lambda+2\mu)^{n/2}}e^{-\frac{x_n^2}{(\lambda+2\mu) t}}
        \biggr)dx_n.
    \end{align*}
    Note that, for any fixed $\varepsilon >0$, the last integral is $O(t^{1-n/2})$ as $t \to 0^+$. Hence
    \begin{align*}
        &\int_{W\cap\Omega}
        \biggl[\frac{1}{(2\pi)^{n}} \int_{\mathbb{R}^{n}} e^{i(x-x^*)\cdot \xi}
        \bigg(
            \frac{1}{2\pi i}\int_{\mathcal{C}} e^{-t\tau} 
            \operatorname{Tr} \textbf{\textit{b}}_{-2} \,d\tau 
        \bigg) \,d\xi\biggr] dV \\
        &\quad = \frac{1}{4} \frac{\operatorname{vol}(W\cap \partial \Omega)}{(4\pi t)^{(n-1)/2}} \biggr(\frac{n-1}{\mu^{(n-1)/2}} + \frac{1}{(\lambda+2\mu)^{(n-1)/2}} + \frac{1}{\alpha^{(n-1)/2}}\biggr) +O(t^{1-n/2})\quad {\rm as}\ t \to 0^+.
    \end{align*}
    Similarly, we obtain
    \begin{align*}
        \int_{W\cap\Omega} \operatorname{Tr}\textbf{\textit{K}}(t,x,x^*)\,dV 
        =& \frac{1}{4} \frac{\operatorname{vol}(W\cap \partial \Omega)}{(4\pi t)^{(n-1)/2}} \biggr(\frac{n-1}{\mu^{(n-1)/2}} + \frac{1}{(\lambda+2\mu)^{(n-1)/2}} + \frac{1}{\alpha^{(n-1)/2}}\biggr) \\
        &+O(t^{1-n/2})\quad {\rm as}\ t \to 0^+.
    \end{align*}
    Therefore
    \begin{align*}
        &\int_{W\cap\Omega} \operatorname{Tr}\textbf{\textit{K}}^{\mp}(t,x,x)\,dV
        = \int_{W\cap\Omega} \operatorname{Tr}\textbf{\textit{K}}(t,x,x)\,dV \mp \int_{W\cap\Omega} \operatorname{Tr}\textbf{\textit{K}}(t,x,x^*)\,dV \\
        &\quad = \frac{\operatorname{vol}(W\cap \Omega) }{(4\pi t)^{n/2}} \biggr(\frac{n-1}{\mu^{n/2}} + \frac{1}{(\lambda+2\mu)^{n/2}} + \frac{1}{\alpha^{n/2}}\biggr) \\
        &\qquad \mp \frac{1}{4} \frac{\operatorname{vol}(W\cap \partial \Omega)}{(4\pi t)^{(n-1)/2}} \biggr(\frac{n-1}{\mu^{(n-1)/2}} + \frac{1}{(\lambda+2\mu)^{(n-1)/2}} + \frac{1}{\alpha^{(n-1)/2}}\biggr) + O(t^{1-n/2}) \quad \text{as}\ t \to 0^+.
    \end{align*}
    Similarly, we can also get the other coefficients $a_k^{\mp}$ for $2 \leqslant k \leqslant n$ by this method.
\end{proof}

\addvspace{5mm}
\begin{proof}[Proof of Corollary {\rm \ref{cor1.4}}]
    Theorem {\rm \ref{thm1.1}} shows that $a_0$ and $a_1^{\mp}$ are spectral invariants. Since the thermoelastic spectrum of $\Omega$ with respect to the Dirichlet or Neumann boundary condition is equal to that of the geodesic ball $B^n\subset \mathcal{M}$, we observe that
    \begin{align}\label{4.1}
        \operatorname{vol}(\Omega)=\operatorname{vol}(B^n),\quad \operatorname{vol}(\partial\Omega)=\operatorname{vol}(\partial B^n).
    \end{align}
    In addition, the isoperimetric inequality holds on $\mathcal{M}$ by condition $(b)$. Then
    \begin{align}\label{4.2}
        \operatorname{vol}(\partial \Omega) \geqslant \operatorname{vol}(\partial B^n),
    \end{align}
    equality holds if and only if $\Omega$ is isometric to $B^n$. Therefore, we immediately deduce that $\Omega$ is isometric to $B^n$ by combining \eqref{4.1} and \eqref{4.2}.
\end{proof}

\addvspace{10mm}

\section*{Acknowledgements}

\addvspace{5mm}

This research was supported by NNSF of China (11671033/A010802) and NNSF of China \\
(11171023/A010801).

\addvspace{10mm}

\addvspace{5mm}

\end{document}